\documentclass[11pt, oneside]{article}   	
\usepackage{geometry}                		
\usepackage{graphicx}				
\usepackage{amssymb}
\usepackage{amsmath}
\usepackage{amsthm}
\usepackage{harpoon}
\usepackage{accents}

\usepackage{tikz}  
\usepackage{float}
\restylefloat{table}
\usepackage{mathrsfs}
\usepackage{verbatim}
\usepackage{enumitem}  

\usetikzlibrary{positioning,chains,fit,shapes,calc}  

\newtheorem{theorem}{Theorem}
\newtheorem{lemma}{Lemma}
\newtheorem{definition}{Definition}

\newtheorem{corollary}{Corollary}

\newcommand{\ppp}{\[\begin{aligned}}
\newcommand{\ooo}{\end{aligned}\]}
\newcommand{\err}{\mathrm{err}}

\newcommand{\mrm}[1]{\mathrm{#1}}

\newcommand{\ti}{\tilde{h}}
\newcommand{\mult}{\mathrm{mult}}

\newcommand{\semi}{\mathrm{semi}}
\newcommand{\tCH}{\tilde{\mathrm{CH}}_2}
\newcommand{\CH}{\mathrm{CH}_2}
\newcommand{\te}{\tilde{e}}

\begin{document}

\title{On the leading and penultimate leading  coefficients for NRS(2) applied to a cubic polynomial}
\author{Mario DeFranco}

\maketitle

\abstract{We prove that the leading and penultimate leading coefficients in $u_3$ of the ``error" terms of NRS(2) applied to a cubic polynomial $f(z) =\sum_{i=0}^3 a_i z^i=\prod_{i=1}^3 (1-u_iz)$ with starting point $(-\frac{a_1}{a_2}, -\frac{a_1}{a_2})$ are positive-coefficient polynomials in $u_1$ and $u_2$. Our proof for the leading coefficients simplifies that of \cite{DeFranco} and extends to the penultimate leading coefficients as well.}

\section{Introduction}

We provide a simpler proof of the main result of \cite{DeFranco} and extend it to include the penultimate leading coefficient. We use much of the notation of \cite{DeFranco} and repeat the setup next. 

Let $f(z) \in \mathbb{C}[z]$ be a degree 3 polynomial 
\[
f(z) = \prod_{i=1}^3 (1-u_iz)
\]
We express the difference between the $n$-th iteration and the auxiliary-function zero
\[
(\alpha_0, \alpha_1)
\]
(see \cite{DeFranco 1} and \cite{DeFranco 2}) using polynomials in $u_1, u_2$ and $u_3$. In this paper, we focus on the leading and penultimate leading coefficients of $u_3$ in these polynomials, which are polynomials in $u_1$ and $u_2$. As in \cite{DeFranco}, we use the ring $\tCH$. In section \ref{s ch identities}, we prove certain identities in this ring. In section \ref{s ms}, we use multisets to express certain elements of $\tCH$ that have non-negative coefficients. Our main results are in section \ref{s formulas}, which simplify the recurrence relations and prove that they have positive coefficients.. 

\section{Relations in $\tilde{\mathrm{CH}}_2$} \label{s ch identities}

\begin{lemma} \label{l hab}
\begin{align}
\ti_A \ti_B - \ti_{A-1} \ti_{B-1} = \ti_{B+A} \label{11}\\ 
\ti_A \ti_B - \ti_{A-1} \ti_{B+1} =\ti_{B-A} \label{1m1}
\end{align}
\end{lemma}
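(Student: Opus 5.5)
The plan is to reduce both identities to a computation with a closed form for $\ti_n$. I am assuming the setup of \cite{DeFranco}: $\tCH$ is the ring of complete homogeneous symmetric polynomials in two variables $x,y$ with the relation $e_2 = xy = 1$ imposed, and $\ti_n$ is the image of $h_n(x,y)$. If the convention there differs (for instance a different normalization of $e_2$), the same computation should go through with the corresponding power of $e_2$ carried along. The sequence $\ti_n$ is extended to all $n \in \mathbb{Z}$ by the recurrence $\ti_{n+1} = e_1 \ti_n - \ti_{n-1}$, equivalently $\ti_{-1}=0$ and $\ti_{-n} = -\ti_{n-2}$.

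\textbf{Step 1: closed form.} First I establish, or quote from \cite{DeFranco}, the formula
\[
\ti_n = \frac{x^{n+1}-y^{n+1}}{x-y} \quad \text{for all } n\in\mathbb{Z}.
\]
It holds for $n \ge 0$ by the usual formula for $h_n$ in two variables. It extends to negative $n$ because, once $y = x^{-1}$ is used, the right-hand side satisfies the same three-term recurrence. The cleanest way to carry out the verification is to work in the localization $\mathbb{Z}[x,x^{-1}]$ (or in its fraction field) with $y=x^{-1}$, and to multiply both sides of each identity by $(x-y)^2$.

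\textbf{Step 2: first identity (\ref{11}).} Expand $(x^{A+1}-y^{A+1})(x^{B+1}-y^{B+1})-(x^A-y^A)(x^B-y^B)$.
\begin{itemize}
\item Since $xy=1$, the mixed terms cancel in pairs: $x^{A+1}y^{B+1}=x^Ay^B$, and symmetrically.
\item What remains is $x^{A+B}(x^2-1)+y^{A+B}(y^2-1)$.
\item Writing $x^2-1=x(x-y)$ and $y^2-1=y(y-x)$, this equals $(x-y)(x^{A+B+1}-y^{A+B+1})$.
\end{itemize}
Dividing by $(x-y)^2$ gives $\ti_{A+B}$.

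\textbf{Step 3: second identity (\ref{1m1}).} The same expansion with $(x^A-y^A)(x^{B+2}-y^{B+2})$ subtracted instead gives the following.
\begin{itemize}
\item The pure powers $x^{A+B+2}$ and $y^{A+B+2}$ cancel.
\item The four mixed terms combine to $(x-y)(x^{B+1}y^A - x^Ay^{B+1})$.
\item Using $xy=1$, this equals $(x-y)(x^{B-A+1}-y^{B-A+1})$.
\end{itemize}
Dividing by $(x-y)^2$ gives $\ti_{B-A}$.

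The main point requiring care is not the algebra but justifying the division by $(x-y)$ and the use of negative indices inside $\tCH$. I will handle it by noting that both sides are polynomial identities in $\ti$'s that hold in the domain $\mathbb{Z}[x,x^{-1}]$. The map from $\tCH$ into this domain, sending $e_1 \mapsto x+x^{-1}$, is injective, so the identities hold in $\tCH$.

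An alternative route, if one prefers to stay inside $\tCH$, is induction on $A$. The case $A=0$ is immediate because $\ti_{-1}=0$. For the inductive step, apply $\ti_{A+1}=e_1\ti_A-\ti_{A-1}$ on the left-hand side and the same recurrence in the index $B\pm A$ on the right-hand side. Negative $A$ then follow from the reflection $\ti_{-n}=-\ti_{n-2}$.
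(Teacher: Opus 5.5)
\textbf{There is a genuine gap: your argument proves the identities in the wrong ring.} The paper states this lemma without proof, so the question is whether your argument establishes it in the ring the paper actually uses. It does not.

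You take $\tCH$ to be the commutative ring generated by the $h_n$ with $e_2=1$, in which $\ti_{-1}=0$ and $\ti_{-n}=-\ti_{n-2}$. That ring is $\CH$, the target of $L$. In this paper $\tCH$ is explicitly non-commutative:
\begin{itemize}
\item the $\ti_j$ for all $j\in\mathbb{Z}$ are linearly independent, which is what makes the shifts $S_i$ well defined and lets elements be tracked by multisets;
\item the product is $g_1g_2=L(g_1)\,g_2$ (Lemma~\ref{l action});
\item the rules $L(\ti_{-1})=0$ and $L(\ti_i)=-h_{-i-2}$ for $i<-1$ reduce only the \emph{left} factor, while the right factor keeps its raw index.
\end{itemize}
For instance $\ti_0\ti_{-1}=\ti_{-1}\neq 0=\ti_{-1}\ti_0$. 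So your claimed injective map from $\tCH$ into $\mathbb{Z}[x,x^{-1}]$ cannot exist.

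Dividing the full product by $(x-y)^2$ applies the divided difference to \emph{both} factors. Your computation therefore proves only $L(\ti_A\ti_B-\ti_{A-1}\ti_{B-1})=L(\ti_{A+B})$, and the analogous statement for the second identity. That is too weak for how the paper uses the lemma. In Theorem~\ref{t W} the identity is used inside $\ti_B(\,\cdot\,)$, and then $S_{-1}$ is applied. But $S_{-1}$ is not even well defined on $\CH$: it would send $\ti_{-1}=0$ to $\ti_{-2}=-h_0$. The reflection $\ti_{-n}=-\ti_{n-2}$ used in your alternative induction is likewise false in $\tCH$; it holds only after applying $L$, i.e.\ for left factors.

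\textbf{How to repair it.} Your idea survives if you use the correct model. Identify $\tCH$ additively with $\mathbb{Z}[x,x^{-1}]$ via $\ti_j\mapsto x^j$. Take the product $g_1g_2=L(g_1)\,g_2$, where $L(x^j)=(x^{j+1}-x^{-j-1})/(x-x^{-1})$. This reproduces the paper's $L$ and its rule $\ti_i\ti_j=\sum_{k=0}^{i}\ti_{j-i+2k}$ for $i\ge 0$. Then apply the divided difference to the left factor only:
\[
\frac{(x^{A+1}-x^{-A-1})x^{B}-(x^{A}-x^{-A})x^{B-1}}{x-x^{-1}}=x^{A+B},\qquad
\frac{(x^{A+1}-x^{-A-1})x^{B}-(x^{A}-x^{-A})x^{B+1}}{x-x^{-1}}=x^{B-A}.
\]
Alternatively, in the spirit of the paper's proof of Lemma~\ref{l haba}, check cases on $A$ directly with the explicit left-multiplication rules. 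Treat $A=0$ and $A=-1$ as boundary cases, since $\ti_{-1}$ acts by zero on the left. In every case the two sums telescope to the single term $\ti_{A+B}$ (resp.\ $\ti_{B-A}$).
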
 

\begin{lemma} \label{l haba} 
\begin{align}
\tilde{h}_B\tilde{h}_{A_1}  \tilde{h}_{A_2} - \tilde{h}_B\tilde{h}_{A_1-1}  \tilde{h}_{A_2-1} &=  \tilde{h}_B\tilde{h}_{A_1+A_2} \label{hbaa}\\
\tilde{h}_{A_1} \tilde{h}_B \tilde{h}_{A_2-1}+ \tilde{h}_{A_1-1} \tilde{h}_B \tilde{h}_{A_2} - \tilde{h}_{A_1-1}\tilde{h}_1 \tilde{h}_B\tilde{h}_{A_2-1} &= \tilde{h}_B\tilde{h}_{A_1+A_2-1} \label{haba}
\end{align}
\end{lemma}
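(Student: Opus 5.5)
Both identities should reduce directly to Lemma \ref{l hab}, treating $\tCH$ as a commutative ring in which the $\ti_n$ are defined for all integers $n$. The relevant normalizations are $\ti_0=1$ and $\ti_{-1}=0$, so that Lemma \ref{l hab} holds for all integer indices. Every monomial in Lemma \ref{l haba} carries the common factor $\ti_B$, so the plan is to factor it out and verify an identity among products of two or three $\ti$'s.

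For \eqref{hbaa}, I will factor $\ti_B$ from the left side to get $\ti_B\bigl(\ti_{A_1}\ti_{A_2}-\ti_{A_1-1}\ti_{A_2-1}\bigr)$. Then I apply \eqref{11} with $A=A_1$, $B=A_2$, which replaces the bracket by $\ti_{A_1+A_2}$. This is immediate.

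For \eqref{haba}, I first reorder the factors using commutativity and factor out $\ti_B$. This leaves
\[
\ti_{A_1}\ti_{A_2-1}+\ti_{A_1-1}\ti_{A_2}-\ti_{A_1-1}\,\ti_1\ti_{A_2-1}.
\]
The key step is to eliminate the triple product with the special case $A=1$ of \eqref{11}. Using $\ti_0=1$, that case reads $\ti_1\ti_{A_2-1}=\ti_{A_2}+\ti_{A_2-2}$. Substituting, the terms $\ti_{A_1-1}\ti_{A_2}$ cancel, and the bracket becomes $\ti_{A_1}\ti_{A_2-1}-\ti_{A_1-1}\ti_{A_2-2}$. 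Applying \eqref{11} once more, with $A=A_1$ and $B=A_2-1$, gives $\ti_{A_1+A_2-1}$. Multiplying back by $\ti_B$ yields \eqref{haba}.

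The main point to check is the structural one. I must confirm that $\tCH$ is commutative, or at least that $\ti_B$ commutes with the other factors. I must also confirm that $\ti_0=1$ and that \eqref{11} is valid for arbitrary (possibly nonpositive) integer indices, so that boundary values of $A_1,A_2$ need no separate treatment. If the ring were not commutative, I would instead redo the computation keeping $\ti_B$ fixed in position and apply the relations only to adjacent factors. The written order suggests, however, that commutativity is intended.
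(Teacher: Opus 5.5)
Your computation is correct, but the justification you give for the reordering is wrong. The paper states explicitly (Section~\ref{s formulas}) that $\tCH$ is non-commutative. Moreover, $\ti_{-1}$ is not $0$ there (only $L(\ti_{-1})=0$), and $\ti_0$ is only a left identity. For instance, $\ti_{-1}\ti_0=L(\ti_{-1})\ti_0=0$ while $\ti_0\ti_{-1}=\ti_{-1}$, and $\ti_1\ti_0=\ti_{-1}+\ti_1\neq\ti_1$.

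What you actually use does hold. By Lemma~\ref{l action} and associativity, a product $g_1\cdots g_k\,g$ equals $L(g_1)\cdots L(g_k)\,g$. Since $\CH$ is commutative, all factors except the rightmost one may be permuted freely. In \eqref{haba} the factor $\ti_B$ is never rightmost, so moving it to the front is legitimate. In your use of \eqref{11} with $A=1$, $\ti_0$ stands on the left, where it acts as $h_0=1$. For \eqref{hbaa} no reordering is needed at all. With ``commutativity'' replaced by this observation, your proof is complete and the fallback you mention is unnecessary.

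Your route also differs from the paper's. The paper keeps $\ti_B$ in the middle and rewrites the four-fold product using $h_1h_B=h_{B-1}+h_{B+1}$. It then applies \eqref{1m1} and \eqref{11} to reach $\ti_{B-A_1}\ti_{A_2-1}+\ti_{A_1-1}\ti_{B+A_2}$. The two left factors now carry different indices, so identifying this with $\ti_B\ti_{A_1+A_2-1}$ forces the paper to expand into explicit sums. It must then split into cases according to whether $B-A_1$, $A_1-1$, $B$ lie below, at, or above $-1$, and only one case is written out.

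You instead expand $\ti_1$ against the final factor, $\ti_1\ti_{A_2-1}=\ti_{A_2}+\ti_{A_2-2}$. This cancels the term $\ti_{A_1-1}\ti_{A_2}$ and leaves exactly the left side of \eqref{11} with $(A,B)=(A_1,A_2-1)$. Your argument uses only \eqref{11}, which holds for all integer indices in $\tCH$, needs no case split, and is uniform in $A_1,A_2,B$. Its only cost is the observation that non-final factors commute, which the paper never states but which follows immediately from Lemma~\ref{l action}.
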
 

\begin{proof}
Equation \eqref{hbaa} follows from Lemma \ref{l hab}, equation \eqref{11}. 
Using 
\[
h_1 h_B = h_{B-1}+h_{B+1}
\] 
we write equation \eqref{haba} as 
\[
(\tilde{h}_{A_1} \tilde{h}_B - \tilde{h}_{A_1-1} \tilde{h}_{B+1})\tilde{h}_{A_2-1} + \tilde{h}_{A_1-1} (\tilde{h}_{B} \tilde{h}_{A_2} - \tilde{h}_{B-1} \tilde{h}_{A_2-1}) 
\]
which by Lemma \eqref{l hab} is equal to 
\begin{equation} \label{h 2 t}
\tilde{h}_{B-A_1}\tilde{h}_{A_2-1} + \tilde{h}_{A_1-1} \tilde{h}_{B+A_2}.  
\end{equation}
Now we consider all the case arising from whether $B-A_1, A_1-1$ and $B$ are less than, equal to, or greater than $-1$. For example, consider the case 
\begin{align*} 
B-A_1 &< -1 \\ 
A_1 -1 &> -1 \\ 
B&< -1.
\end{align*}
Then the left side of equation \eqref{h 2 t} is 
\begin{align*}
\sum_{j=0}^{A_1-B-2} \ti_{A_2-1+B-A_1+2+2j} + \sum_{j=0}^{A_1-1} \ti_{B+A_2-A_1+1+2j} &= -\sum_{j=0}^{-B-2} \ti_{A_1+A_2+B+1+2j} \\ 
&= \ti_B \ti_{A_1+A_2-1}.
\end{align*}
The other cases are similar. This completes the proof. 
\end{proof}

\begin{definition} 
We define a left action of $\mathrm{CH}_2$ on $\tilde{\mathrm{CH}}_2$ for $i\geq 0$ and $g \in \tilde{\mathrm{CH}}_2$, by 
\[
h_i g = \ti_i g.
\]
\end{definition} 

\begin{lemma} \label{l action}
For $g_1,g_2 \in \tCH$, 
\[
g_1 g_2 = L(g_1) g_2.
\]
\end{lemma}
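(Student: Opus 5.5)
The plan is to reduce the statement to linearity plus one compatibility fact: the left action of $\CH$ on $\tCH$ is a genuine module action whose generator $h_1$ acts by multiplication by $\ti_1$. I read $L$ as the linear map $\tCH \to \CH$ that first writes $g_1$ in the basis $\{\ti_i\}_{i\ge 0}$ and then replaces each $\ti_i$ by $h_i$. Here the basis expansion uses the reflection relations $\ti_{-1}=0$ and $\ti_{-i-2}=-\ti_i$ to eliminate negative indices. So if $g_1=\sum_{i\ge 0} c_i \ti_i$, then $L(g_1)=\sum_{i\ge0} c_i h_i$.

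The key step is to check that the rule $h_i g=\ti_i g$ is compatible with the ring structure of $\CH$. In other words, for $p\in\CH$, acting by $p$ should equal multiplying by the element obtained from $p$ by substituting $\ti_i$ for $h_i$. Since $\CH$ is spanned by the $h_i$ and satisfies $h_1h_B=h_{B-1}+h_{B+1}$ with $h_0=1$, each $h_i$ is a polynomial in $h_1$ (a Chebyshev-type recurrence). It therefore suffices to verify the same recurrence in $\tCH$, namely $\ti_1\ti_B=\ti_{B-1}+\ti_{B+1}$ for all integers $B$. This is exactly Lemma \ref{l hab}, equation \eqref{11}, with $A=1$: since $\ti_0=1$, it gives $\ti_1\ti_B-\ti_{B-1}=\ti_{B+1}$. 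Once this holds, induction on $i$ shows that the element $\ti_i$ equals the same polynomial in $\ti_1$ that $h_i$ is in $h_1$. Consequently $(h_ih_j)g=h_i(h_jg)$, so the action factors through multiplication in $\tCH$.

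Given this, the lemma follows by linearity:
\[
L(g_1)g_2=\sum_{i\ge0} c_i\, h_i g_2=\sum_{i\ge 0} c_i\,\ti_i g_2 = g_1g_2 .
\]
The first equality is the definition of $L$, the second is the definition of the action, and the third uses the basis expansion of $g_1$.

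I expect the main obstacle to be bookkeeping rather than substance. One must confirm that every $g_1\in\tCH$, including products like $\ti_A\ti_B$ with negative indices, really does reduce to a combination of $\ti_i$ with $i\ge0$, so that $L$ is well defined. One must also check that \eqref{11} is valid for all integers $B$, not only nonnegative ones. For the first point I would use Lemma \ref{l hab} repeatedly to collapse products of two tilde-elements into sums, together with the reflection rule. For the second point I would do a case check on the sign of $B+1$, as in the proof of Lemma \ref{l haba}.
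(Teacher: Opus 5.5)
There is a genuine gap, and it lies in your model of $\tCH$. In this paper the elements $\ti_i$, $i\in\mathbb{Z}$, are linearly independent, and the relations $\ti_{-1}=0$ and $\ti_{-i-2}=-\ti_i$ are \emph{not} identities in $\tCH$. If they were, three things would break:
\begin{itemize}
\item $\tCH$ would just be $\CH$ with renamed generators, hence commutative. The paper stresses that $\tCH$ is non-commutative; indeed $\ti_1\ti_0=\ti_{-1}+\ti_1\neq\ti_1=\ti_0\ti_1$.
\item The shift $S_{-1}$ used in Theorem \ref{t W} would not be well defined, since it would have to send $\ti_{-1}=0$ to $\ti_{-2}=-\ti_0\neq 0$.
\item The multiset bookkeeping of Lemma \ref{l pos} would be pointless, since there negative-index terms are kept in $\tCH$ and only folded by $L$.
\end{itemize}
So $L$ is defined directly on the basis $\{\ti_i\}_{i\in\mathbb{Z}}$ by the three-case formula, and it is not injective. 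The expansion $g_1=\sum_{i\ge0}c_i\ti_i$ that justifies your third equality is false in general: for $g_1=\ti_{-1}$ you get $L(g_1)=0$ while $g_1\neq 0$. The reduction to nonnegative indices that you defer as ``bookkeeping'' cannot be carried out.

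The actual content of the lemma is that a left factor is seen only through $L$. That is, $\ti_{-1}g=0$ and $\ti_{-i-2}g=-\ti_i g$ for all $g\in\tCH$, even though $\ti_{-1}$ and $\ti_{-i-2}+\ti_i$ are nonzero elements. This is precisely how the product on $\tCH$ is defined: the left factor is folded and the right factor is kept, with $\ti_i\ti_j=\sum_{k=0}^{i}\ti_{j-i+2k}$ for $i\ge 0$ and every $j\in\mathbb{Z}$. That is why the paper's proof is simply a bilinear reduction to $g_1=\ti_i$, $g_2=\ti_j$, followed by that definition.

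Your module-compatibility step is unnecessary, because the action of $L(g_1)=\sum c_i h_i$ is defined by linearity from $h_i g=\ti_i g$. As you state it, it is also false in $\tCH$: $\ti_0$ is only a left identity, so for instance $\ti_1^3-2\ti_1=\ti_3+\ti_{-1}\neq\ti_3$. The claim holds only for left-multiplication operators, and proving even that uses $\ti_{-1}g=0$, i.e.\ the very fact the lemma encodes.
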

\begin{proof}
It is sufficient to check when $g_1$ and $g_2$ are generators. By definition 
\[
L(\ti_i) = \begin{cases} 0 \text{ if } i=-1 \\ 
h_i \text{ if } i > -1 \\ 
-h_{-i-2} \text{ if } i < -1
\end{cases}.
\]
This is the definition of multiplication in $\tCH$. This completes the proof.  
\end{proof}

\begin{definition} 
For  $g_1,g_2,g_3 \in \tCH$, define 
\begin{align*}
W_0(g_1,g_2,g_3) &= g_2(g_1g_3 - S_{-1}(g_1)S_{-1}(g_3))\\ 
W_1(g_1,g_2,g_3) &= S_{-1}(g_1)g_2g_3+ g_1g_2S_{-1}(g_3) - S_{-1}(g_1)g_2S_{-1}(g_3)
\end{align*}
\end{definition} 

\begin{theorem} \label{t W}
For  $g_1,g_2,g_3 \in \tCH$,
\[
W_1(g_1,g_2,g_3) = S_{-1}(W_0(g_1,g_2,g_3)).
\]
\end{theorem}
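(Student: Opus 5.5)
The plan is to reduce to generators and then read the identity off Lemma \ref{l haba}. Both $W_0$ and $W_1$ are trilinear in $(g_1,g_2,g_3)$, and $S_{-1}$ is linear, so it suffices to check Theorem \ref{t W} when each $g_i$ is a single generator of $\tCH$:
\[
g_1=\ti_{A_1},\qquad g_2=\ti_B,\qquad g_3=\ti_{A_2},
\]
with $A_1,A_2,B$ arbitrary integers. I use $S_{-1}$ as the index shift $\ti_i\mapsto \ti_{i-1}$ on generators. Then
\[
S_{-1}(g_1)=\ti_{A_1-1},\qquad S_{-1}(g_3)=\ti_{A_2-1}.
\]

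First I compute $W_0$. With the above substitution,
\[
W_0=\ti_B\bigl(\ti_{A_1}\ti_{A_2}-\ti_{A_1-1}\ti_{A_2-1}\bigr),
\]
and equation \eqref{hbaa} of Lemma \ref{l haba} collapses this to $\ti_B\ti_{A_1+A_2}$.

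Next I need $S_{-1}(\ti_B\ti_{A_1+A_2})$, and I claim it equals $\ti_B\ti_{A_1+A_2-1}$. The argument goes through Lemma \ref{l action}. It rewrites $\ti_B g$ as $L(\ti_B)g$, where $L(\ti_B)$ is $0$, $h_B$, or $-h_{-B-2}$. Multiplication by each $h_i$ is a fixed linear operator on generators. So it suffices to check that $S_{-1}$ commutes with left multiplication by $h_i$, that is, $S_{-1}(h_i\ti_k)=h_i\ti_{k-1}$. I would verify this from the expansion $h_i\ti_k=\sum_j \ti_{k-i+2j}$ implicit in the proof of Lemma \ref{l haba}. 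Shifting every index down by one is visibly compatible with that expansion, including the sign conventions for indices below $-1$.

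The remaining step is to show
\[
W_1=\ti_{A_1-1}\ti_B\ti_{A_2}+\ti_{A_1}\ti_B\ti_{A_2-1}-\ti_{A_1-1}\ti_B\ti_{A_2-1}=\ti_B\ti_{A_1+A_2-1}.
\]
This is essentially equation \eqref{haba}, and matching the two is where I expect the real difficulty. In \eqref{haba} the subtracted term carries an extra factor $\ti_1$, i.e.\ it is $\ti_{A_1-1}\ti_1\ti_B\ti_{A_2-1}$. To reconcile the two I would do the following.
\begin{itemize}
\item Go back to the proof of Lemma \ref{l haba}. There the relation $h_1h_B=h_{B-1}+h_{B+1}$ was used to split the subtracted term into the two pieces that pair with $\ti_{A_1}\ti_B\ti_{A_2-1}$ and $\ti_{A_1-1}\ti_B\ti_{A_2}$ via Lemma \ref{l hab}.
\item Check whether, with the paper's convention for $S_{-1}$ on products, the term $S_{-1}(g_1)g_2S_{-1}(g_3)$ is meant to absorb exactly that splitting.
\item If it is not, apply \eqref{1m1} and \eqref{11} directly to the two pairs. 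This gives $\ti_{B-A_1}\ti_{A_2-1}+\ti_{A_1-1}\ti_{B+A_2}$ as in \eqref{h 2 t}, up to a correction term.
\item Finish with the same case analysis on the signs of $B-A_1$, $A_1-1$ and $B$ relative to $-1$, collapsing telescoping sums of generators.
\end{itemize}
This bookkeeping of cases, with the sign flips for negative indices, is the main obstacle. Once it is done for one case, the others should follow by symmetric sums.
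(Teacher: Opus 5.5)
Your overall route is the paper's: reduce to generators by trilinearity, get $W_0=\ti_B\ti_{A_1+A_2}$ from \eqref{hbaa} (i.e.\ \eqref{11}), and use that $S_{-1}$ commutes with left multiplication. Your explicit justification of that last point is fine; the paper uses it only tacitly.

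The gap is the final step, which you leave as an unresolved fork, and the fallback branch cannot succeed. With $W_1$ read literally as printed, without the factor $\ti_1$, the theorem is false. Take $g_1=g_3=\ti_1$ and $g_2=\ti_0$. Since $\ti_0$ acts as the identity and $\ti_1$ acts as $S_{-1}+S_1$,
\[
W_0=\ti_1\ti_1-\ti_0\ti_0=\ti_2,\qquad S_{-1}(W_0)=\ti_1,
\]
whereas the literal expression is
\[
\ti_0\ti_0\ti_1+\ti_1\ti_0\ti_0-\ti_0\ti_0\ti_0=2\ti_1+\ti_{-1}-\ti_0 .
\]
In general the literal expression differs from $S_{-1}(W_0)$ by
\[
\ti_{A_1-1}(\ti_1\ti_B-\ti_B)\ti_{A_2-1}=\ti_{A_1-1}(\ti_{B-1}+\ti_{B+1}-\ti_B)\ti_{A_2-1}.
\]
This is not zero in general; it equals $\ti_{-1}+\ti_1-\ti_0$ in the example above. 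So no combination of \eqref{1m1}, \eqref{11} and case analysis can make your ``correction term'' disappear. Likewise, no convention for $S_{-1}$ on products makes $S_{-1}(g_1)g_2S_{-1}(g_3)$ absorb the splitting $\ti_1\ti_B=\ti_{B-1}+\ti_{B+1}$.

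The missing idea is to fix the definition, not the computation. $W_1$ is meant to be
\[
W_1(g_1,g_2,g_3)=S_{-1}(g_1)g_2g_3+g_1g_2S_{-1}(g_3)-S_{-1}(g_1)\,\ti_1\,g_2S_{-1}(g_3).
\]
This is the form in the recurrence for $\te(n+1,1,0)$ in Definition \ref{d lc0 rec}. It is also the form the paper's proof actually expands: its second display, mislabeled $W_0$, contains the term $\ti_{A-1}\ti_1\ti_B\ti_{C-1}$.

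With this definition, $W_1(\ti_{A_1},\ti_B,\ti_{A_2})$ is exactly the left side of \eqref{haba}. Lemma \ref{l haba} then gives $\ti_B\ti_{A_1+A_2-1}=S_{-1}(\ti_B\ti_{A_1+A_2})$ at once. The case analysis you flag as the main obstacle was already done in proving that lemma, so nothing further is needed.
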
 
\begin{proof}
We write 
\[
g_i = \sum_{j \in \mathbb{Z}} c_{i,j} \ti_j.
\] 
Then 
\[
 W_0(g_1,g_2,g_3) = \sum_{(A,B,C) \in \mathbb{Z}^3} c_{1,A}c_{2,B}c_{3,C} \ti_B(\ti_A \ti_C  - \ti_{A-1}\ti_{C-1}).
\]
By Lemma \ref{l hab}, equation \eqref{11}, this becomes 
\begin{equation}\label{0 ABC sum}
\sum_{(A,B,C) \in \mathbb{Z}^3} c_{1,A}c_{2,B}c_{3,C} \ti_B \ti_{A+C}.
\end{equation}
Likewise 
 \[
 W_0(g_1,g_2,g_3)  = \sum_{(A,B,C) \in \mathbb{Z}^3} c_{1,A}c_{2,B}c_{3,C}  \ti_{A-1} \ti_B \ti_{C}  +\ti_{A} \ti_B \ti_{C-1} - \ti_{A-1} \ti_1 \ti_B \ti_{C-1}. 
 \]
By Lemma \ref{l hab}, equation \eqref{1m1}, this becomes 
\begin{align} \label{1 ABC sum}
\sum_{(A,B,C) \in \mathbb{Z}^3} c_{1,A}c_{2,B}c_{3,C}\ti_B \ti_{A+C-1} 
\end{align} 
which is equal to $S_{-1}$ applied to the sum \eqref{0 ABC sum}. This completes the proof. 
\end{proof}

\section{Multisets} \label{s ms}
\begin{definition} 
Let $\mathrm{MultiSets}$ denote the set of finite multisets of integers, and $\mathrm{MultiSets}^+ \subset \mathrm{MultiSets}$ denote those multisets consisting of non-negative integers. 
For $M \in \mathrm{MultiSets}$, let $\mult(i,M)$ denote the multiplicity of $i$ in $M$.
For two multisets $M_1$ and $M_2$ in $\mathrm{MultiSets}$,, let $M_1+M_2$ denote the multiset sum
\[
M_1+M_2 = \{ i_1+i_2 \colon i_1 \in M_1, i_2 \in M_2\},
\]
so 
\[
\mult(i,M_1+M_2) = \sum_{k \in \mathbb{Z}} \mult(i-k, M_1) \mult(k, M_2).
\]
For integer $i$, we let the operators $S_i$ act on $\mathrm{MultiSets}$ by 
\[
S_i(M) = M + \{ i\}.
\]

Let $M_1 \cup M_2$ denote multiset union, so that 
\[
\mult(i, M_1 \cup M_2) = \mult(i,M_1)+\mult(i,M_2). 
\]

For integers $a,b$ with $a \equiv b \mod 2$, let $[a,b] \in \mathrm{MultiSets}$
\[
[a,b] = \{ a, a+2, \dots, b-2,b \}.
\]

\end{definition} 

\begin{lemma} \label{l mn}
For integers $b_i \geq a_i, i=1,2$, we have
\begin{align}
[a_1,b_1]+[a_2,b_2] &= \bigcup_{j=0}^{\min(b_1-a_1, b_2-a_2)/2} [a_1+a_2+2j, b_1+b_2-2j]. \label{add intervals}\\ 
\end{align}
\end{lemma}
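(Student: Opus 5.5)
We will reduce the statement to the Clebsch--Gordan rule for $\mathfrak{sl}_2$ characters, or equivalently prove it by a direct multiplicity count. The first step is a normalization. Since $[a,b] = [0,b-a] + \{a\}$, we have $[a_1,b_1]+[a_2,b_2] = S_{a_1+a_2}\bigl([0,m]+[0,n]\bigr)$ with $m=b_1-a_1$ and $n=b_2-a_2$. The right side of \eqref{add intervals} transforms in the same way: each block becomes $S_{a_1+a_2}([2j, m+n-2j])$. Multiset sum is commutative, so we may also assume $m\le n$, making the range of $j$ equal to $0\le j\le m/2$. The claim thus becomes
\[
[0,m]+[0,n] = \bigcup_{j=0}^{m/2} [2j, m+n-2j], \qquad m\le n,
\]
where $m$ and $n$ are even, because $a_i\equiv b_i \pmod 2$.

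Next we compare multiplicities. Every element of either side is even and lies in $[0,m+n]$. Write $i=2k$. On the left, $\mult(2k,[0,m]+[0,n])$ counts pairs $(p,q)$ with $0\le p\le m/2$, $0\le q\le n/2$ and $p+q=k$. This count equals
\[
\#\{p : \max(0,k-n/2)\le p\le \min(m/2,k)\}.
\]
On the right, $2k$ lies in $[2j,m+n-2j]$ exactly when $j\le k$ and $j\le (m+n)/2-k$. The multiplicity is therefore
\[
\#\{j : 0\le j\le \min\bigl(m/2,\,k,\,(m+n)/2-k\bigr)\}.
\]

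It remains to check that these two counts agree, which we will do by cases on $k$. We split $0\le k\le (m+n)/2$ into three ranges: $k\le m/2$, then $m/2\le k\le n/2$, then $k\ge n/2$. The middle range is nonempty because $m\le n$. In each range both counts reduce to the same value: $k+1$ in the first, $m/2+1$ in the second, and $(m+n)/2-k+1$ in the third. In the third range the identity to verify is $\min(m/2,k)-\max(0,k-n/2) = (m+n)/2 - k$, which holds because $k\ge n/2$ and $k\le (m+n)/2$ imply $k-n/2\le m/2$.

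The main obstacle is the bookkeeping at the boundaries of the three ranges and keeping track of which of the $\min$ terms is active. A cleaner alternative avoids this. The generating polynomial of $[0,m]$ is $\frac{1-x^{m+2}}{1-x^2}$, and the union on the right telescopes: its generating function is $\sum_j \frac{x^{2j}-x^{m+n+2-2j}}{1-x^2}$, which should equal the product $\frac{(1-x^{m+2})(1-x^{n+2})}{(1-x^2)^2}$. We will check this identity of rational functions directly if the case analysis becomes messy.
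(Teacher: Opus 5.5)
Your proof is correct, but it takes a different route from the paper's.

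The paper uses the same reduction $b_1-a_1\le b_2-a_2$ and then argues bijectively. It plots the grid of pairs $(a_1+2j,a_2+2i)$ and splits it into L-shaped hooks. The $j$-th hook runs up the column $x=a_1+2j$ from row $a_2$ to row $b_2-2j$, then right along that row to $x=b_1$. The coordinate sums along this hook are exactly the elements of $[a_1+a_2+2j,\,b_1+b_2-2j]$, each once.

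You instead translate to $[0,m]+[0,n]$ and compare multiplicities pointwise. You do this either by the three-range case analysis or by the generating-function identity. All three ranges check out, and the overlapping endpoints $k=m/2$ and $k=n/2$ cause no trouble. The rational-function identity also holds: multiplying your union's generating function by $1-x^2$ and summing the two geometric series gives $\frac{(1-x^{m+2})(1-x^{n+2})}{1-x^2}$, as required.

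What each approach gives you:
\begin{itemize}
\item The paper's hook picture yields an explicit bijection between pairs of addends and elements of the union, which is the device reused from the earlier paper. It does leave unstated the check that the hooks are disjoint and cover the grid.
\item Your count is mechanical and self-contained.
\item The generating-function version is the cleanest: it makes the Clebsch--Gordan structure visible and needs no boundary bookkeeping.
\end{itemize}

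One small correction. In the third range, $\min(m/2,k)-\max(0,k-n/2)=(m+n)/2-k$ holds simply because $\min(m/2,k)=m/2$ and $\max(0,k-n/2)=k-n/2$ there. The inequality $k-n/2\le m/2$ that you cite is what makes the cardinality equal to that difference plus one; it is not the reason the identity holds.
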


\begin{proof}
Without loss of generality, assume $b_1-a_1\leq b_2-a_2$. Equation \eqref{add intervals} follows plotting the points 
 \[
 (a_1+2j, a_2+2i)
 \]
 for $0 \leq j \leq (b_1-a_1)/2$ and $0 \leq i \leq (b_2-a_2)/2$ in the $x-y$ plane. Start with a point $(a_1+2j,a_2)$, proceed upwards until you reach the point $(a_1+2j,b_2-2j)$, and then proceed to the right. The coordinates of a point in this path are the addends that sum to a number in $[a_1+a_2+2j, b_1+b_2-2j]$. This is the same reasoning from Lemma 3, iii) of \cite{DeFranco}. This completes the proof. 
\end{proof}


\begin{definition}
For $M \in \mathrm{MultiSets}$, define $\ti(M) \in \tCH$ by 
\[
\ti(M) = \sum_{i \in \mathbb{Z}} \mult(i,M)\ti_i.
\]

Let $\tCH^+ \subset \tCH$ and $\mathrm{CH}_2^+ \subset \mathrm{CH}_2$ denote the sets 
\[
\{ \ti(M) \colon M \in \mathrm{MultiSets}^+ \} \text{ and } \{ h(M) \colon M \in \mathrm{MultiSets}^+ \}
\]
respectively. 
\end{definition}

\begin{definition} \label{d Rc}
For an integer $c$, let $R(c)$ denote the set of multisets $M$ such that 
\[
 \mult(c-i-1,M)  \leq  \mult(c-i,M) \leq \mult(c+i,M)
\]
for all $i \geq 0$. That is, $M \in R(c)$ is of the form 
\begin{equation} \label{Rc}
M = \bigcup_{i=1}^{k_1} \{ m_i\} \cup\bigcup_{i=1}^{k_2} [c-n_i, c+n_i]
\end{equation}
for some integers $m_i \geq c$, $n_i \geq 1$ and $k_1, k_2 \geq 0$. 

Let $\tCH(c) \subset \tCH$  denote the set
\[
\{ \ti(M) \colon M \in R(c) \}.
\]
\end{definition} 

\begin{lemma} \label{l Rc add}
\begin{equation} \label{Rc sub}
R(c+1) \subset R(c)
\end{equation}

For $M_1 \in R(c_1)$ and $M_2 \in R(c_2)$, 
\begin{equation} \label{Rc add}
M_1+M_2 \in R(c_1+c_2).
\end{equation}
\end{lemma}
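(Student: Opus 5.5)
For the inclusion \eqref{Rc sub} I will use the structural form \eqref{Rc} rather than the inequality definition. Let $M \in R(c+1)$, written as a union of singletons $\{m_i\}$ with $m_i \geq c+1$ and symmetric intervals $[c+1-n_i, c+1+n_i]$ with $n_i \geq 1$. Each singleton already has $m_i \geq c$. For the intervals, I split
\[
[c+1-n,\, c+1+n] = [c-(n-1),\, c+(n-1)] \cup \{c+1+n\}
\]
when $n \geq 2$. This works because the left side is $\{c+1-n, c+3-n, \dots, c+1+n\}$, and after removing the top element we get exactly the interval centred at $c$ with radius $n-1$. When $n=1$, the interval $[c,c+2]=\{c,c+2\}$ is just two singletons, both $\geq c$. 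So $M$ has the form \eqref{Rc} with centre $c$, and $M\in R(c)$.

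For \eqref{Rc add} I also use the form \eqref{Rc}. Multiset sum distributes over union: $(A\cup B)+C=(A+C)\cup(B+C)$, as the multiplicity formula shows. So it suffices to check that each sum of one building block of $M_1$ with one building block of $M_2$ lies in $R(c_1+c_2)$, and then observe that $R(c)$ is closed under union, which is clear from \eqref{Rc}. There are three cases.

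\begin{enumerate}[label=(\roman*)]
\item \emph{Singleton plus singleton.} $\{m_1\}+\{m_2\}=\{m_1+m_2\}$, and $m_1+m_2 \geq c_1+c_2$.
\item \emph{Singleton plus interval.} $\{m\}+[c_2-n,c_2+n]=[m+c_2-n,\, m+c_2+n]$ with $d=m-c_1\geq 0$. This is the interval $[c-n+d,\, c+n+d]$ around $c=c_1+c_2$, i.e.\ a symmetric interval shifted right by $d$. By iterating the splitting used for \eqref{Rc sub} $d$ times, which is just the fact that $R(c+d)\subset R(c)$ applied to the form \eqref{Rc}, it lies in $R(c)$. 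Each step needs care when the radius drops to $0$, where the block becomes a singleton $\{c'\}$ with $c'\geq c$; that is still allowed.
\item \emph{Interval plus interval.} Lemma \ref{l mn} gives
\[
[c_1-n_1,c_1+n_1]+[c_2-n_2,c_2+n_2]=\bigcup_{j=0}^{\min(n_1,n_2)} [c-n_1-n_2+2j,\; c+n_1+n_2-2j],
\]
which is a union of intervals symmetric about $c$. The term with radius $0$ is the singleton $\{c\}$, which is also allowed.
\end{enumerate}

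I expect the main obstacle to be making the shifted-interval step in (ii) precise, so that the parity and end-point bookkeeping is correct. The cleanest route is to prove \eqref{Rc sub} first and then deduce by induction that $R(c+d)\subset R(c)$ for all $d\geq0$. With that in hand, case (ii) reduces to the observation that a symmetric interval about $c+d$ lies in $R(c+d)$.
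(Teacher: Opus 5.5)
Your proposal is correct and follows essentially the paper's route: the same top-element split $[c+1-n,c+1+n]=[c-(n-1),c+(n-1)]\cup\{c+1+n\}$ for the inclusion, and the same blockwise case analysis for sums, with Lemma~\ref{l mn} handling interval plus interval. Where the paper's formulas have typos (the peeled-off element, the range of $j$), your versions are the accurate ones; the only real variation is the singleton-plus-interval case, where the paper directly splits $[c+k-n,c+k+n]$ into $[c-(n-k),c+(n-k)]$ plus singletons $\geq c$ (only singletons when $n\le k$), while you obtain the same conclusion by iterating $R(c+1)\subset R(c)$, which is equivalent and slightly tidier.
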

\begin{proof} 
From formula \eqref{Rc}, if $m_i \geq c+1$ then $m_i \geq c$, and 
\[
[c+1-n_1,c+1+n_i]  = [c-(n_i-1), c+(n_i-1)] \cup \{ c+n_i\} \in R(c).
\]
This proves statement \eqref{Rc sub}. 
From Lemma \ref{l mn}
\begin{align*}
[c_1-n_1, c_1+n_1] + [c_2-n_2,c_2+n_2] &\in \bigcup_{j=1}^{\min(n_1,n_2)} [c_1+c_2 - n_1-n_2+2j, c_1+c_2+n_1+n_2 - 2j] \\ 
 &\in R(c_1+c_2).
\end{align*}
And 
\begin{align*}
&\{c_1+k\} + [c_2-n, c_2+n] \\ 
&= \begin{cases}[c+1+c_2+k-n, c+1+c_2-k+n] \cup [ c+1+c_2-k+n+2,c_1+c_2+k+n] &\text{ if } n \geq k+1 \\  
[ c_1+c_2+k-n,c_1+c_2+k+n] &\text{ if } n \leq k
\end{cases}
\end{align*}
which is also in $R(c_1+c_2)$. The last case is 
\[
\{c_1+k_1\} + \{ c_2+k_2\} =  \{c_1 +c_2+k_1+k_2\}  \in R(c_1+c_2). 
\]
This proves statement \eqref{Rc add}.
\end{proof}

\begin{lemma} \label{l left Rc}
For $M \in R(c)$, $i \geq 0$, we have
\[
\ti_i \ti(M) = \ti([-i,i]+M) \in \tCH(c). 
\]
\end{lemma}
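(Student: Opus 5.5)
We need to show $\ti_i \ti(M) = \ti([-i,i]+M)$ and that $[-i,i]+M\in R(c)$.

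The plan has two parts: an algebraic identity in $\tCH$, and a membership statement. For the identity, linearity of $\ti(\cdot)$ under multiset union and linearity of multiplication reduce everything to a single generator $\ti_m$ with $m\in M$. We must then show $\ti_i\ti_m=\sum_{j=0}^{i}\ti_{m-i+2j}=\ti([-i,i]+\{m\})$ for $i\ge 0$, i.e.\ a Clebsch--Gordan rule in $\tCH$ that holds without positivity restrictions on $m$.

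To prove this, we would use Lemma \ref{l action} to write $\ti_i\ti_m=h_i\ti_m$, where $h_i$ acts on the left through the $\CH$ action. We then induct on $i$. The cases $i=0$ ($h_0=1$) and $i=1$ (the relation $h_1\ti_m=\ti_{m-1}+\ti_{m+1}$, used in the proof of Lemma \ref{l haba}) are the base. The step uses the $\CH$ recurrence $h_{i+1}=h_1h_i-h_{i-1}$:
\[
h_{i+1}\ti_m=h_1\Big(\sum_{j=0}^{i}\ti_{m-i+2j}\Big)-\sum_{j=0}^{i-1}\ti_{m-i+1+2j}.
\]
Expanding and telescoping gives $\sum_{j=0}^{i+1}\ti_{m-i-1+2j}$. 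This inductive step, and in particular checking that the relation $h_1\ti_m=\ti_{m-1}+\ti_{m+1}$ holds for all integers $m$, including the degenerate values $m=-1,-2$ where $\ti_{-1}=0$ and $\ti_{m}=-\ti_{-m-2}$, is the only real content. I expect it to be the main obstacle, and I would verify those boundary cases directly from the definition of $L$ in Lemma \ref{l action}. Alternatively, the identity could be derived as a special case of Lemma \ref{l hab}, equation \eqref{11}, by summing telescopically over $A$.

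For membership, note that $[-i,i]\in R(0)$: it is either empty in the sense of the interval form (when $i=0$, it is $\{0\}$ with $m_1=0\ge 0$) or of the form $[0-n,0+n]$ with $n=i\ge1$, as in \eqref{Rc}. Lemma \ref{l Rc add}, statement \eqref{Rc add}, with $c_1=0$ and $c_2=c$, then gives $[-i,i]+M\in R(c)$. Hence $\ti([-i,i]+M)\in\tCH(c)$ by definition, which completes the argument.
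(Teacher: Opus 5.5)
Your proposal is correct and follows the paper's own argument. The paper simply asserts that left multiplication by $\ti_i$ acts as the shift operator $\sum_{j=0}^{i}S_{-i+2j}$, then concludes from $[-i,i]\in R(0)$ and Lemma \ref{l Rc add}, exactly as in your membership step; your induction on $i$ (or the telescoping of \eqref{11}) only spells out a rule the paper takes as given.

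One minor correction: in $\tCH$ the $\ti_m$ are linearly independent (otherwise $S_{-1}$ would be ill-defined and $\tCH$ would be commutative). So $\ti_{-1}=0$ and $\ti_m=-\ti_{-m-2}$ hold only after applying $L$, and there are no degenerate values of $m$ to check in the right-hand factor.
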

\begin{proof}
Left multiplication by $\ti_i$ is equivalent to applying the operator 
\[
\sum_{j=-i}^i S_{-i+2j}.
\] 
Since $[-i,i]\in R(0)$, by Lemma \ref{l Rc add} we have 
\[
\ti([-i,i]+M) \in R(0+c). 
\]
This completes the proof. 
\end{proof}

\begin{lemma} \label{l pos}
Suppose $g = \ti(M)$ with $M\in R(c)$, $c \geq 0$. Then
\[
L(g) \in \CH^+.
\]
\end{lemma}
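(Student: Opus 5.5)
The map $L$ is linear and acts on generators by $L(\ti_i)=h_i$ for $i\ge 0$, $L(\ti_{-1})=0$, and $L(\ti_i)=-h_{-i-2}$ for $i<-1$ (see the proof of Lemma \ref{l action}). The plan is to use the explicit form \eqref{Rc} of a multiset in $R(c)$ and apply $L$ to each piece separately. Since $\ti$ converts multiset union into sums, we have
\[
g=\ti(M)=\sum_{i=1}^{k_1}\ti_{m_i}+\sum_{i=1}^{k_2}\ti([c-n_i,c+n_i]),
\]
with $m_i\ge c$ and $n_i\ge 1$. The goal is to show that $L$ of each summand lies in $\CH^+$. This suffices because $\CH^+$ is closed under addition: a sum $h(M_1)+h(M_2)$ equals $h(M_1\cup M_2)$.

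\textbf{Singletons.} Since $m_i\ge c\ge 0$, we get $L(\ti_{m_i})=h_{m_i}=h(\{m_i\})\in\CH^+$.

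\textbf{Intervals.} Consider $[c-n,c+n]$ with $n\ge 1$. Its elements $c-n+2j$ are nonnegative except possibly some that are negative when $n>c$. I would pair the terms symmetrically under the reflection $i\mapsto -i-2$, since $L(\ti_i)=-L(\ti_{-i-2})$.
\begin{itemize}
\item Every negative element $i=c-n+2j<-1$ cancels against $-i-2=n-c-2-2j$. This partner is $\ge 0$, has the same parity as $c-n$, and is at most $n-c-2\le c+n$ because $c\ge 0$. So the partner lies in the same interval.
\item The element $-1$, if present, contributes $0$.
\end{itemize}
Concretely, if $c-n\le -2$, then
\[
L(\ti([c-n,c+n]))=h([\,n-c,\;c+n\,]),
\]
since the negative part $[c-n,-2]$ cancels exactly the part $[0,n-c-2]$ (or $[1,n-c-2]$, depending on parity) and the $-1$ term vanishes. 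The remaining indices run from $n-c$ (or $n-c-1+\dots$, by the same parity bookkeeping) up to $c+n$, and all of them are nonnegative. If instead $c-n\ge -1$, there is nothing to cancel: all indices are $\ge -1$ and $L$ just drops a possible $-1$ term.

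In every case the result is $h$ of a multiset of nonnegative integers, so it lies in $\CH^+$. The required inequality is $-i-2\le c+n$ for all $i\ge c-n$, which is equivalent to $c\ge -1$. So the hypothesis $c\ge 0$ is more than enough.

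\textbf{Main obstacle.} The main obstacle is verifying the cancellation correctly, including the parity cases and the edge case at $-1$. To settle it I would write out the two parities of $c-n$ explicitly and check that the map $i\mapsto -i-2$ sends the set of negative elements $\{i<-1\}$ injectively into $[c-n,c+n]\cap\mathbb{Z}_{\ge0}$. Injectivity is clear because the map is an involution. After that, positivity follows immediately.
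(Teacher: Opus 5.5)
Your proof is correct, but it takes a different route from the paper's.

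\textbf{The paper's route.} It never decomposes $M$. It writes
\[
L(\ti(M))=\sum_{i\ge 0}\bigl(\mult(i,M)-\mult(-i-2,M)\bigr)h_i
\]
and checks each coefficient directly from the inequality description of $R(c)$. For $i\le c$ it uses monotonicity of multiplicities below $c$, going from $i$ down to $-i-2$. For $i=c+k>c$ it uses the chain $\mult(c+k,M)\ge\mult(c-k,M)\ge\mult(-c-k-2,M)$, where the last step is again monotonicity together with $c\ge 0$.

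\textbf{Your route.} You use the generator form \eqref{Rc}, linearity of $L$, and closure of $\CH^+$ under sums. This reduces the problem to a single symmetric interval, which you handle with the involution $i\mapsto -i-2$.

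\textbf{Comparison.} The paper's argument is shorter and needs only the inequalities. Yours needs the explicit form \eqref{Rc}, but it gives more information, namely the closed formula
\[
L(\ti([c-n,c+n]))=h([\,|c-n|,\,c+n\,])\qquad (c\ge 0,\ n\ge 1).
\]
This formula also makes visible that $c\ge -1$ is the true threshold. Your parenthetical ``or $n-c-1+\dots$'' is unnecessary. Since $n-c\equiv c+n \pmod 2$, the surviving indices are exactly $[n-c,c+n]$ in both parity cases, and the case $c-n=-1$ fits the same formula.

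There is one point in your favor. The inequality in Definition \ref{d Rc}, as literally written with $c-i-1$, is not equivalent to \eqref{Rc}: already $[c-1,c+1]$ violates it. The intended condition is presumably $\mult(c-i-2,M)\le\mult(c-i,M)$. Working from \eqref{Rc}, which is what Lemma \ref{l Rc add} actually produces, avoids this discrepancy. The paper's proof happens to remain valid under either reading of the inequality.
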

\begin{proof}

We must prove for $i \geq 0$ that 
\[
\mult(i,M) - \mult(-i-2,M) \geq 0.
\]
If $i \leq c$, then by definition of $R(c)$
\[
\mult(-i-2,M)  \leq \mult(i,M).
\]
If $i > c$, then with $i = c+k$, 
\begin{align*}
 \mult(i,M) &= \mult(c+k,M) \\ 
 &\geq \mult(c-k,M) \\
 &\geq \mult(-c-k-2,M)\\ 
  &=  \mult(-i-2,M)
 \end{align*}
 where we again used the definition of $R(c)$. This completes the proof. 
\end{proof}


\section{Formulas for leading and penultimate leading coefficients} \label{s formulas}

\begin{definition}

sing the degrees from  Lemma 1 of \cite{DeFranco} we denote the leading and penultimate leading coefficients by 
\begin{align*}
\err'(n,0) &= \sum_{i=0}^{2(3^n-2^n)} e(n,0,i)u_3^{2(3^n-2^n)-i} \\ 
\err'(n,1) &= \sum_{i=0}^{2(3^n-2^n)} e(n,1,i)u_3^{2(3^n-2^n)-i}\\ 
\err'(n,-1) &= \sum_{i=0}^{2(3^n)-1} e(n,-1,i)u_3^{2(3^n)-1-i}.
\end{align*}
We substitute these expressions into the modified recurrent relations of \cite{DeFranco} (equations (5)) and obtain relations for the leading coefficients in Definition \ref{d lc0 rec} and the penultimate leading coefficients in Definition \ref{d lc1 rec}.
\end{definition} 

\subsection{Formula for leading coefficients} 

We take the recurrence relations from Definition 8 of \cite{DeFranco}, but in each term we have changed the order of the factors. Thus the $\tilde{e}(n,i,0)$ defined below  (for $i \in \{ 0,1\}$) differ from the $\tilde{\err}_0(n,i)$ in \cite{DeFranco} because $\tilde{\mathrm{CH}}_2$ is non-commutative, but they have the same image under $L$. 
\begin{definition} \label{d lc0 rec}
For integer $n \geq 0$, we recursively define elements $\te(n,0,0)$ and $\te(n,1,0) \in \tilde{\mrm{CH}}_2$ by 
\begin{align*}
\te(0,0,0) &= \tilde{h}_2\\ 
\te(0,1,0) &= \tilde{h}_1,
\end{align*}
and
\begin{align}
\te(n+1,0,0) &= \te(n+1,0,0) (\te(n+1,0,0)^2 - \te(n+1,1,0)^2)   \label{0 rec}\\ 
\te(n+1,1,0) &= \te(n+1,1,0)\te(n+1,0,0)\te(n+1,0,0)  \nonumber \\ 
&\,\,\,\,\,+\te(n+1,0,0)\te(n+1,0,0)\te(n+1,1,0) \\
&\,\,\,\,\,-\te(n+1,1,0) \tilde{h}_1\te(n+1,0,0)\te(n+1,1,0)\label{1 rec}\\ 
\te(n+1,-1,0) &= \te(n+1,1,0)\te(n+1,0,0)\te(n+1,0,0)  \nonumber \\ 
&\,\,\,\,\,+\te(n+1,0,0)\te(n+1,0,0)\te(n+1,1,0) \\
&\,\,\,\,\,-\te(n+1,1,0) \tilde{h}_1\te(n+1,0,0)\te(n+1,1,0)\\ 
&\,\,\,\,\,+ \te(n,-1,0)^2(\te(n,-1,0)-\te(n,1,0)).
\end{align}

\end{definition}

\begin{theorem} \label{t 0 main}
For $n \geq 0$
\begin{equation} \label{s0}
\te(n,1,0) = S_{-1}(\te(n,0,0)).
\end{equation}
\begin{equation} \label{-1eq1}
\te(n,-1,0) = \te(n,1,0)
\end{equation}
and
\begin{equation} \label{multiset 0}
\te(n,0,0)   = \ti(M_{n,0})
\end{equation}
where $M_{n,0} \in R(2^{n+1})$. 
Furthermore
\begin{equation} \label{e0 form}
\te(n+1,0,0) = L(\te(n,0,0)) \ti(M_{n,0} + M_{n,0}).
\end{equation}

\end{theorem}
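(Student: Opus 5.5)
The plan is to argue by induction on $n$, establishing \eqref{s0}, \eqref{-1eq1} and \eqref{multiset 0} simultaneously, with \eqref{e0 form} obtained along the way in the inductive step. I read the right-hand sides of \eqref{0 rec} and \eqref{1 rec} as involving $\te(n,\cdot,0)$; the printed index $n+1$ appears to be a typo. I also read $W_1$ as containing the factor $\ti_1$ in its last term, matching \eqref{1 rec} and the computation in the proof of Theorem \ref{t W}.

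\emph{Base case.} We have $\te(0,0,0)=\ti_2=\ti(\{2\})$, and $\{2\}\in R(2)$ because $m_1=2\ge c=2$. Also $\te(0,1,0)=\ti_1=S_{-1}(\ti_2)$. I take $\te(0,-1,0)=\ti_1$ as the initial value for the third sequence, so that \eqref{-1eq1} holds at $n=0$.

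\emph{Inductive step.} Set $g=\te(n,0,0)=\ti(M_{n,0})$. By the induction hypothesis $\te(n,1,0)=S_{-1}(g)$, so the recurrences read
\[
\te(n+1,0,0)=W_0(g,g,g), \qquad \te(n+1,1,0)=W_1(g,g,g).
\]
Theorem \ref{t W} then gives $\te(n+1,1,0)=S_{-1}(\te(n+1,0,0))$, which is \eqref{s0}. For the $-1$ sequence, the extra term $\te(n,-1,0)^2(\te(n,-1,0)-\te(n,1,0))$ vanishes by the induction hypothesis \eqref{-1eq1}. The rest of the recurrence for $\te(n+1,-1,0)$ coincides with that for $\te(n+1,1,0)$, which proves \eqref{-1eq1}.

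For \eqref{e0 form}, the proof of Theorem \ref{t W} (equation \eqref{0 ABC sum}) expresses $W_0(g,g,g)$ as
\[
\sum_{A,B,C} c_A c_B c_C\, \ti_B\ti_{A+C} = g\cdot \ti(M_{n,0}+M_{n,0}).
\]
This uses the fact that $\sum_{A,C} c_A c_C\,\ti_{A+C}=\ti(M+M)$ by the definition of the multiset sum. Lemma \ref{l action} then turns the left factor into $L(g)$, giving $\te(n+1,0,0)=L(\te(n,0,0))\,\ti(M_{n,0}+M_{n,0})$.

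\emph{Membership in $R(2^{n+2})$.} By Lemma \ref{l Rc add}, $M_{n,0}+M_{n,0}\in R(2^{n+2})$. Since $2^{n+1}\ge 0$, Lemma \ref{l pos} gives $L(g)\in\CH^+$, so $L(g)=\sum_i d_i h_i$ with $d_i\ge 0$. Lemma \ref{l left Rc} shows that each $h_i\,\ti(M_{n,0}+M_{n,0})=\ti([-i,i]+M_{n,0}+M_{n,0})$ with the multiset in $R(2^{n+2})$. Finally, $R(c)$ is closed under multiset union, which is clear from the normal form \eqref{Rc}, and a non-negative combination of $\ti$'s of multisets corresponds to their union. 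So $\te(n+1,0,0)=\ti(M_{n+1,0})$ with $M_{n+1,0}\in R(2^{n+2})$.

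\emph{Main obstacle.} The delicate point is the step from the non-commutative products in the recurrence to the single-sum form \eqref{0 ABC sum}. There the middle factor $\ti_B$ is sandwiched, and one must use Lemma \ref{l haba} (equation \eqref{hbaa}, and \eqref{haba} for $W_1$) rather than Lemma \ref{l hab} directly. I would first verify carefully that the ordering of factors in Definition \ref{d lc0 rec} matches $W_0(g,g,g)=g_2(g_1g_3-S_{-1}(g_1)S_{-1}(g_3))$ and the form of $W_1$ term by term. Once that matches, the rest is bookkeeping with the lemmas of Section \ref{s ms}.
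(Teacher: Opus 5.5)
Your proof is correct and follows the paper's argument essentially step for step: the same induction, rewriting the recurrences as $W_0(g,g,g)$ and $W_1(g,g,g)$ and applying Theorem \ref{t W}, then reading off $\te(n+1,0,0)=L(\te(n,0,0))\,\ti(M_{n,0}+M_{n,0})$ from the proof of Theorem \ref{t W} via Lemmas \ref{l Rc add}, \ref{l left Rc} and \ref{l action}. Your extra appeal to Lemma \ref{l pos} shows that $L(\te(n,0,0))$ has non-negative coefficients before you use Lemma \ref{l left Rc} (which only covers $\ti_i$ with $i\ge 0$) and closure of $R(2^{n+2})$ under unions; this makes explicit a step the paper's proof leaves implicit.
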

\begin{proof}
 For $n=0$, we have 
 \[
 \te(n,-1,0) = \te(n,1,0)  = \ti_1 = S_{-1}(\ti_2)= S_{-1}( \te(n,0,0)) 
 \]
 and $h_2 \in \tCH(2)$.

Assume the statements are true for some $n \geq 0$. Then substituting in the induction hypothesis \eqref{-1eq1} into the recurrence relation for of $\te(n+1,-1,0)$ yields 
\[
\te(n+1,-1,0) = \te(n+1,1,0). 
\]

Then using the induction hypothesis \eqref{s0} we have
\begin{align*}
\te(n+1,0,0) &=  W_0(g_1,g_2,g_3)\\ 
\te(n+1,1,0) &= W_1(g_1,g_2,g_3)
\end{align*}
where 
\[
g_1 = g_2=g_3 =  \te(n,0,0).
\]
Applying Theorem \ref{t W} then completes the induction step for equation \eqref{s0}. 

By the induction hypothesis
\[
\te(n,0,0) = \ti(M_{n,0})
\]
for some $M_{n,0} \in R(2^{n+1})$.
From the proof of Theorem \ref{t W}, 
 \begin{equation} \label{sum AB}
\te(n+1,0,0) = \ti(M_{n,0}) \sum_{(A,B) \in M_{n,0}^2} \ti_{A+B}.
 \end{equation}
By definition 
 \[
 \sum_{(A,B) \in M_{n,0}^2} \ti_{A+B}=\ti(M_{n,0}+M_{n,0}),
 \]
which is in $\tCH(2^{n+2})$ by Lemma \ref{l Rc add}. By Lemma \ref{l left Rc} the product \eqref{sum AB} is also in $\tCH(2^{n+2})$. Left multiplication by $\ti(M_{n,0})$ is the same as left multiplication by $L(\ti(M_{n,0}) )$ by Lemma \ref{l action}. This completes the induction step for statements \eqref{multiset 0} and \eqref{e0 form}.
This completes the proof. 
\end{proof}
\begin{corollary} 
For $i\in \{-1,0,1\}$
\begin{equation}
L(\te(n,i,0)) \in \CH^+.
\end{equation}
By Lemma \ref{l pos}, it is sufficient to show that these elements are in $\tCH(c)$ for some $c \geq 0$. $\te(n,0,0) \in \tCH(c)$ with $c = 2^{n+1}\geq 0$, and $\te(n,1,0) = S_{-1}(\te(n,0,0)) \in \tCH(c)$ with $c = 2^{n+1}-1\geq 0$. And $\te(n,-1,0) = \te(n,1,0)$. This completes the proof. 

\end{corollary}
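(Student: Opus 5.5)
The plan is to reduce the Corollary to Lemma \ref{l pos}. That lemma says $L(\ti(M)) \in \CH^+$ whenever $M \in R(c)$ for some $c \geq 0$. So for each $i \in \{-1,0,1\}$ it suffices to write $\te(n,i,0)$ as $\ti(M)$ with $M$ in some $R(c)$, $c\ge 0$. All the structural input comes from Theorem \ref{t 0 main}, so no new computation in $\tCH$ is needed.

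\emph{Case $i=0$.} By \eqref{multiset 0}, $\te(n,0,0)=\ti(M_{n,0})$ with $M_{n,0}\in R(2^{n+1})$. Since $2^{n+1}\geq 0$, Lemma \ref{l pos} applies directly.

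\emph{Case $i=1$.} By \eqref{s0}, $\te(n,1,0)=S_{-1}(\te(n,0,0))=\ti(S_{-1}(M_{n,0}))=\ti(M_{n,0}+\{-1\})$. Here I need one small verification: shifting preserves the $R$-structure, i.e.\ $M\in R(c)$ implies $M+\{-1\}\in R(c-1)$. This is immediate from the defining inequalities $\mult(c-i-1,M)\le\mult(c-i,M)\le\mult(c+i,M)$, since every index moves down by one. Alternatively, it is the case $c_2=-1$, $M_2=\{-1\}\in R(-1)$ of Lemma \ref{l Rc add}. Hence $M_{n,0}+\{-1\}\in R(2^{n+1}-1)$, and $2^{n+1}-1\geq 1\geq 0$ for all $n\ge 0$.

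\emph{Case $i=-1$.} By \eqref{-1eq1}, $\te(n,-1,0)=\te(n,1,0)$, so this case reduces to the previous one.

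The only point needing care is the shift step. One must apply the shift to the multiset, using $\ti(M)\mapsto \ti(M+\{-1\})$, and not interpret $S_{-1}$ through $L$. One must also confirm that the resulting center $2^{n+1}-1$ is still nonnegative, which is what allows Lemma \ref{l pos} to be used. I expect no real obstacle beyond this bookkeeping, since the substantive content, namely closure of $R(c)$ under multiset sums and left multiplication, was already established in the proof of Theorem \ref{t 0 main}.
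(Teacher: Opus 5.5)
Your proposal is correct and follows the paper's argument: you reduce to Lemma \ref{l pos} using $M_{n,0}\in R(2^{n+1})$ from Theorem \ref{t 0 main}, the shift $\te(n,1,0)=S_{-1}(\te(n,0,0))$ landing in $R(2^{n+1}-1)$, and $\te(n,-1,0)=\te(n,1,0)$. Your explicit check that $M\in R(c)$ implies $M+\{-1\}\in R(c-1)$ (directly, or via Lemma \ref{l Rc add} with $\{-1\}\in R(-1)$) fills in a step the paper only asserts.
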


\subsection{Formula for penultimate leading coefficients}

From Theorem \ref{t 0 main} we have 
\[
\te(n,1,0) = \te(n,-1,0) = S_{-1}( \te(n,0,0)) 
\]
which have been used in expressing the relations below.

\begin{definition} \label{d lc1 rec}

\begin{align*}
\te(0,0,1) &= 0\\ 
\te(0,1,1) &= 0 \\ 
\te(0,-1,1) &= \ti_0
\end{align*}

\begin{align}
\te(n+1,0,1) &= W_0(\te(n,0,0),\te(n,0,0), \te(n,0,1) + S_{-1}(\te(n,0,0)) ) \label{r0 1} \\ 
&\,\,\,\,\, + W_0(\te(n,0,0),\te(n,0,1), \te(n,0,0) ) \label{r0 2} \\
&\,\,\,\,\,+ W_0(\te(n,0,1),\te(n,0,0), \te(n,0,0) )  \label{r0 3} \\ 
\te(n+1,1,1) &= W_1(\te(n,0,0),\te(n,0,0), \te(n,0,1) + S_{-1}(\te(n,0,0)) ) \label{r1 1} \\ 
&\,\,\,\,\, + W_1(\te(n,0,0),\te(n,0,1), \te(n,0,0) ) \label{r1 2} \\
&\,\,\,\,\,+ W_1(\te(n,0,1),\te(n,0,0), \te(n,0,0) )  \label{r1 3} \\ 
\te(n+1,-1,1) &= W_1(\te(n,0,0),\te(n,0,0), \te(n,0,1) + S_{-1}(\te(n,0,0)) ) \label{d 1} \\ 
&\,\,\,\,\, + W_1(\te(n,0,0),\te(n,0,1), \te(n,0,0) ) \label{d 2} \\
&\,\,\,\,\, \te(n,-1,1)\te(n,0,0)^2+\te(n,0,1)\te(n,0,0)S_{-1}(\te(n,0,0)) \label{d 3} \\ 
&\,\,\,\,\, -\te(n,-1,1) h_1 \te(n,0,0)S_{-1}(\te(n,0,0)) \label{d 4} \\   
&\,\,\,\,\, +2S_{-1}(\te(n,0,0)) \te(n,0,0)S_{-1}(\te(n,0,0))  \label{d 5} \\ 
&\,\,\,\,\,-S_{-1}(\te(n,0,0)) \ti_1 S_{-1}(\te(n,0,0)) S_{-1}(\te(n,0,0))  \label{d 6} \\ 
&\,\,\,\,\, S_{-1}(\te(n,0,0)) (\te(n,-1,1)-\te(n,1,1)  )S_{-1}(\te(n,0,0))  \label{d 7} 
\end{align}
\end{definition}

 \begin{theorem} \label{t lc1 s}
\begin{align}
\te(n,1,1) &= S_{-1}(\te(n,0,1) \label{lc1 1}\\
\te(n,-1,1) &= S_{-1}(\te(n,0,1))+ S_{-2}(\te(n,0,0)) \label{lc1 -1}
\end{align}
 \end{theorem}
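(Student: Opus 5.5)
The plan is a simultaneous induction on $n$, in the same style as the proof of Theorem \ref{t 0 main}. Throughout, write $e_0=\te(n,0,0)$ and $e_1=\te(n,0,1)$.

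\textbf{Base case.} At $n=0$ we have $\te(0,1,1)=0=S_{-1}(0)$, which gives \eqref{lc1 1}. For \eqref{lc1 -1} we have $\te(0,-1,1)=\ti_0=S_{-2}(\ti_2)=S_{-1}(0)+S_{-2}(\te(0,0,0))$.

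\textbf{Inductive step for \eqref{lc1 1}.} Assume both identities hold at level $n$. This step needs no induction hypothesis on the penultimate terms, since the three terms \eqref{r1 1}--\eqref{r1 3} are exactly $W_1$ of the same triples whose $W_0$ gives \eqref{r0 1}--\eqref{r0 3}. Theorem \ref{t W} applied to each triple gives $W_1=S_{-1}(W_0)$. Because $S_{-1}$ is additive, summing yields $\te(n+1,1,1)=S_{-1}(\te(n+1,0,1))$.

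\textbf{Inductive step for \eqref{lc1 -1}.} The terms \eqref{d 1} and \eqref{d 2} are the first two summands of $\te(n+1,1,1)$. It therefore suffices to show that the remaining terms \eqref{d 3}--\eqref{d 7} add up to
\[
W_1(e_1,e_0,e_0)+S_{-2}(\te(n+1,0,0)).
\]
Here $\te(n+1,0,0)=W_0(e_0,e_0,e_0)$, and by the proof of Theorem \ref{t W} its expansion in generators is $\sum c_Ac_Bc_C\,\ti_B\ti_{A+C}$.

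I would substitute the induction hypotheses into \eqref{d 3}, \eqref{d 4} and \eqref{d 7}:
\[
\te(n,-1,1)=S_{-1}(e_1)+S_{-2}(e_0), \qquad \te(n,-1,1)-\te(n,1,1)=S_{-2}(e_0).
\]
The terms involving $S_{-1}(e_1)$ in \eqref{d 3} and \eqref{d 4}, together with $e_1 e_0 S_{-1}(e_0)$, should assemble into $W_1(e_1,e_0,e_0)$, with the factors in the order matching the definition of $W_1$. This is checked directly from the definitions.

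What is left is an expression cubic in $e_0$:
\[
S_{-2}(e_0)\,e_0^2-S_{-2}(e_0)\,h_1\,e_0\,S_{-1}(e_0)+2S_{-1}(e_0)\,e_0\,S_{-1}(e_0)-S_{-1}(e_0)\,\ti_1\,S_{-1}(e_0)S_{-1}(e_0)+S_{-1}(e_0)\,S_{-2}(e_0)\,S_{-1}(e_0).
\]
This must equal $S_{-2}(W_0(e_0,e_0,e_0))$. To prove it, I would expand by trilinearity over generators $\ti_A,\ti_B,\ti_C$ and reduce to a single identity among products of at most four $\ti$'s, with target $\ti_B\ti_{A+C-2}$. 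The tools are Lemma \ref{l hab}, its shifted forms (replace $A$ by $A-1$ or $C$ by $C-1$ in \eqref{11} and \eqref{1m1}), and Lemma \ref{l haba}.

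\textbf{Expected obstacle.} The main difficulty is the noncommutativity of $\tCH$. The positions of the factors in the terms above (first, middle, last) do not line up with the $(A,B,C)$ roles used in Theorem \ref{t W}. I would first use Lemma \ref{l action} to replace the leftmost factor by its image under $L$, which acts through $h_i h_B=h_{B-i}+\dots$. I would then apply \eqref{1m1} twice, grouping the terms pairwise as in the proof of \eqref{haba}: $(\ti_{A-2}\ti_B-\ti_{A-1}\ti_{B+1})$-type differences followed by a $(\ti_B\ti_C-\ti_{B-1}\ti_{C-1})$-type difference. If these pairings do not close up directly, I would fall back on the case analysis on signs relative to $-1$ used in Lemma \ref{l haba}.
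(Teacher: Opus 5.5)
Your proposal is correct. Everything up to the cubic remainder (base case, the $W_0/W_1$ pairing for \eqref{lc1 1}, substituting the induction hypothesis, and peeling off $W_1(e_1,e_0,e_0)$) is what the paper does. One caveat: this peeling, in your proof and the paper's alike, needs $W_1$ read with the factor $\ti_1$ in its last term, $-S_{-1}(g_1)\ti_1g_2S_{-1}(g_3)$, as in the proof of Theorem \ref{t W}; the displayed definition drops it.

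The two routes differ only at the cubic remainder. The paper does no new generator computation there. It regroups the remainder as $W_1(S_{-1}(e_0),e_0,e_0)$, silently using $\ti_1S_{-1}(e_0)=e_0+S_{-2}(e_0)$ to dispose of the surplus $S_{-1}(e_0)\,[e_0-\ti_1S_{-1}(e_0)+S_{-2}(e_0)]\,S_{-1}(e_0)=0$. It then reads off from $W_1=\sum c_Ac_Bc_C\,\ti_B\ti_{A+C-1}$ that shifting the first argument shifts the output, and Theorem \ref{t W} gives $S_{-1}W_1(e_0,e_0,e_0)=S_{-2}W_0(e_0,e_0,e_0)$. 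You instead inline that computation generator by generator, which is self-contained but re-derives what Theorem \ref{t W} already packages.

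Your computation closes more directly than you fear. Attach $A,B,C$ to the first, middle and last factor. Splitting $-\ti_{A-1}\ti_1\ti_{B-1}\ti_{C-1}$ via $\ti_1\ti_{B-1}=\ti_{B-2}+\ti_B$ cancels $\ti_{A-1}\ti_{B-2}\ti_{C-1}$ and one of the two copies of $\ti_{A-1}\ti_B\ti_{C-1}$, leaving
\[
\ti_{A-1}\ti_B\ti_{C-1}+\ti_{A-2}\ti_B\ti_C-\ti_{A-2}\ti_1\ti_B\ti_{C-1}.
\]
This is literally the left side of \eqref{haba} with $A_1=A-1$, $A_2=C$, hence equals $\ti_B\ti_{A+C-2}$. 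No symmetrization, fallback case analysis, or use of $L$ is needed. The positional obstacle you anticipate also does not exist: by Lemma \ref{l action}, $g_1g_2g_3=L(g_1)L(g_2)g_3$ with $\CH$ commutative, so only the rightmost factor is special.

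Finally, your sample pairing $\ti_{A-2}\ti_B-\ti_{A-1}\ti_{B+1}$ is not of the form \eqref{1m1}. The grouping from the proof of Lemma \ref{l haba} here is $(\ti_{A-1}\ti_B-\ti_{A-2}\ti_{B+1})\ti_{C-1}+\ti_{A-2}(\ti_B\ti_C-\ti_{B-1}\ti_{C-1})$.
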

\begin{proof} 
Both statements are true for $n=0$. Apply Theorem \ref{t W} to each of the pairs \eqref{r0 1} and \eqref{r1 1},  \eqref{r0 2} and \eqref{r1 2}, and  \eqref{r0 3} and \eqref{r1 3}. This proves statement \eqref{lc1 1}.  

Now to prove statement \eqref{lc1 -1}, we use induction on $n$. Assume the statement is true for some $n \geq 0$. By the induction hypothesis lines \eqref{d 3} and \eqref{d 4} become
\begin{align}
\te(n,1,1)\te(n,0,0)^2+\te(n,0,1)\te(n,0,0)S_{-1}(\te(n,0,0)) \label{d2 1}\\ 
 -\te(n,1,1) h_1 \te(n,0,0)S_{-1}(\te(n,0,0)) \label{d2 2} \\ 
 + S_{-2}(\te(n,0,0))\te(n,0,0)^2 -S_{-2}(\te(n,0,0))h_1 \te(n,0,0)S_{-1}(\te(n,0,0)) \label{d2 3}.
\end{align}
Lines \eqref{d2 1} and \eqref{d2 2} are equal to \eqref{r1 3}, and added to \eqref{d 1} and \eqref{d 2} are equal to $\te(n,1,1)$. 

Applying the induction hypothesis to \eqref{d 7} yields 
\begin{equation}
 S_{-1}(\te(n,0,0))  S_{-2}(\te(n,0,0))  S_{-1}(\te(n,0,0)).  \label{d3 1} 
\end{equation}

Adding \eqref{d 5}, \eqref{d 6}, \eqref{d2 3}, and \eqref{d3 1} yields 
\begin{align*}
W_1(S_{-1}(\te(n,0,0)),\te(n,0,0),\te(n,0,0))  &= S_{-1}(W_1(\te(n,0,0),\te(n,0,0),\te(n,0,0)))\\ 
&= S_{-2}(W_0(\te(n,0,0),\te(n,0,0),\te(n,0,0)))\\ 
&= S_{-2}(\te(n+1,0,0)).
\end{align*}
This completes the induction step and the proof. 
\end{proof}

\begin{theorem} \label{t 1 main}
Let $M_{n,0}$ denote the multiset from Theorem \ref{t 0 main}. 
There exists multisets $M_{n,1}$ such that 
\[
\te(n,0,1) = \ti(M_{n,1})
\]
with $M_{n,1} \in R(2^{n+1}-1)$.
\begin{align}
\te(n+1,0,1) &= L(\ti(M_{n,0})) \ti(M_{n,0}+ S_{-1}(M_{n,0})) \nonumber  \\ 
&\,\,\,\,\,+ 2L(\ti(M_{n,0})) \ti(M_{n,0}+ M_{n,1})\nonumber \\ 
&\,\,\,\,\,+L(\ti(M_{n,1})) \ti(M_{n,0}+ M_{n,0}) \label{1 main form}.
\end{align}

\end{theorem}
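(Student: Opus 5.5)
The plan is to run an induction on $n$, mirroring the proof of Theorem \ref{t 0 main}. For $n=0$ we have $\te(0,0,1)=0=\ti(\emptyset)$, and the empty multiset lies in every $R(c)$. So the base case holds with $M_{0,1}=\emptyset$.

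Assume $\te(n,0,1)=\ti(M_{n,1})$ with $M_{n,1}\in R(2^{n+1}-1)$. We already know $\te(n,0,0)=\ti(M_{n,0})$ with $M_{n,0}\in R(2^{n+1})$. First we evaluate each of the three $W_0$ terms \eqref{r0 1}, \eqref{r0 2}, \eqref{r0 3} using the computation in the proof of Theorem \ref{t W}. That computation shows
\[
W_0(g_1,g_2,g_3)=\sum_{A,B,C} c_{1,A}c_{2,B}c_{3,C}\,\ti_B\ti_{A+C}.
\]
When $g_1=\ti(M_1)$ and $g_3=\ti(M_3)$, this becomes
\[
W_0(g_1,g_2,g_3)=g_2\,\ti(M_1+M_3),
\]
because $\sum_{A\in M_1,C\in M_3}\ti_{A+C}=\ti(M_1+M_3)$. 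Since $W_0$ is linear in $g_3$, the term \eqref{r0 1} is
\[
\ti(M_{n,0})\,\ti(M_{n,0}+M_{n,1})+\ti(M_{n,0})\,\ti(M_{n,0}+S_{-1}(M_{n,0})),
\]
using $S_{-1}(\ti(M))=\ti(S_{-1}(M))$. The term \eqref{r0 2} is $\ti(M_{n,1})\,\ti(M_{n,0}+M_{n,0})$. The term \eqref{r0 3} is $\ti(M_{n,0})\,\ti(M_{n,1}+M_{n,0})$; multiset sum is commutative, so this merges with the first piece of \eqref{r0 1} and produces the coefficient $2$. Finally, Lemma \ref{l action} replaces each left factor $g$ by $L(g)$, which gives formula \eqref{1 main form}.

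Next we show the new multiset lies in $R(2^{n+2}-1)$. By Lemma \ref{l Rc add}, the multisets $M_{n,0}+S_{-1}(M_{n,0})$ and $M_{n,0}+M_{n,1}$ lie in $R(2^{n+2}-1)$, since $S_{-1}(M_{n,0})\in R(2^{n+1}-1)$. By \eqref{Rc sub}, $M_{n,0}+M_{n,0}\in R(2^{n+2})\subset R(2^{n+2}-1)$. Lemma \ref{l left Rc} says left multiplication by $\ti_i$ with $i\ge 0$ sends $\ti(M)$ to $\ti([-i,i]+M)$ and preserves $R(c)$. It remains to note that $R(c)$ is closed under multiset union; this holds because the defining inequalities are additive in the multiplicities.

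The main obstacle is the left factors $L(\ti(M_{n,0}))$ and $L(\ti(M_{n,1}))$. To use Lemma \ref{l left Rc}, these must be non-negative combinations of $h_i$ with $i\ge0$, that is, elements of $\CH^+$. For $M_{n,0}$ this follows from Lemma \ref{l pos}, since $2^{n+1}\ge0$. For $M_{n,1}$ it follows from the induction hypothesis $M_{n,1}\in R(2^{n+1}-1)$ with $2^{n+1}-1\ge 0$, again by Lemma \ref{l pos}. Write $L(\ti(M))=\sum_i d_i h_i$ with $d_i\ge0$. Each summand then becomes $\ti\big(\bigcup_i d_i\cdot([-i,i]+M')\big)$ with $M'\in R(2^{n+2}-1)$. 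Define $M_{n+1,1}$ as the union of the resulting multisets, which completes the induction.

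One point needs care: the recurrence \eqref{r0 1} uses $S_{-1}(\te(n,0,0))$ with no correction term. We take Definition \ref{d lc1 rec} as given, so no further input beyond Theorem \ref{t 0 main} is needed.
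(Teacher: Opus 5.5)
Your proof is correct and is essentially the paper's argument. Both proceed by induction on $n$, evaluate \eqref{r0 1}--\eqref{r0 3} via $W_0(\ti(M_1),g_2,\ti(M_3))=g_2\,\ti(M_1+M_3)$ from the proof of Theorem \ref{t W}, pass to $L$ by Lemma \ref{l action}, and conclude with Lemma \ref{l Rc add}, Lemma \ref{l left Rc} and closure of $R(c)$ under unions.

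Your explicit appeal to Lemma \ref{l pos}, showing that $L(\ti(M_{n,0}))$ and $L(\ti(M_{n,1}))$ lie in $\CH^+$ so that the products really are $\ti$ of multisets, fills in a step the paper leaves implicit. Your direct evaluation of the three terms also avoids the typos in the paper's intermediate display.
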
 
\begin{proof}
We use induction on $n$. We have 
\[
M_{0,1} = \emptyset.
\]
Assume the statements are true for some $n \geq 0$. 

In the definition $\te(n+1,0,1)$ of Definition \ref{d lc1 rec}, we apply Theorem \ref{t lc1 s}, equation \eqref{lc1 -1} and use the induction hypothesis to obtain 
 \begin{align*}
 \te(n+1,0,1) &= W_0(S_{-1}(\te(n,0,0)), \te(n,0,0), \te(n,0,0))\\
  &\,\,\,\,\,+ 2W_0(\te(n,0,1),\te(n,0,0),\te(n,0,1))\\ 
  &\,\,\,\,\,+W_0(\te(n,0,0), \te(n,0,1), \te(n,0,1)).
 \end{align*}
  From the proof of Theorem \ref{t W}, we use
 \[
 W_0(\ti(M_1), \ti(M_2), \ti(M_3)) = L(\ti(M_2))\ti(M_1+M_3).
 \]
 to obtain equation \eqref{1 main form}. 
 This implies that $M_{n+1,1}$ exists. Since $M_{n,0} \in R(2^{n+1})$ by Theorem \ref{t 0 main} and $M_{n,1} \in R(2^{n+1}-1)$ by the induction hypothesis, we have by Lemma \ref{l Rc add} 
 \begin{align*}
 M_{n,0}+ S_{-1}(M_{n,0}) &\in R(2^{n+2}-1)\\ 
 M_{n,0}+ M_{n,1} &\in R(2^{n+2}-1) \\ 
 M_{n,0}+ M_{n,0}  &\in R(2^{n+2}). 
 \end{align*}

By Lemma, \ref{l left Rc}, for any $c$, multiplying by $\ti_i$ for $i \geq 0$ preserves $R(c)$, and by Lemma \ref{l Rc add} 
\[
R(2^{n+2}) \subset R(2^{n+2}-1).
\]
Since $R(c)$ for any $c$ is closed under unions, we have that \[
M_{n+1,1}  = ( M_{n,0}+ S_{-1}(M_{n,0})) \cup (\bigcup_{i=1}^2 ( M_{n,0}+ M_{n,1}))\cup ( M_{n,0}+ M_{n,0}) \in R(2^{n+2}-1).
\] This completes the proof. 
\end{proof}
\begin{corollary} 

For $i \in \{ -1,0,1\}$
\[
L(\te(n,i,1)) \in \mathrm{CH}_2^+.
\]
\end{corollary}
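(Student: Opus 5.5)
By Lemma \ref{l pos}, it suffices to show that for each $i \in \{-1,0,1\}$ the element $\te(n,i,1)$ equals $\ti(M)$ for some multiset $M \in R(c)$ with $c \geq 0$. This is the same reduction used for the corollary to Theorem \ref{t 0 main}. We then treat the three values of $i$ separately, feeding in the explicit forms from Theorem \ref{t 1 main} and Theorem \ref{t lc1 s}.

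For $i=0$: Theorem \ref{t 1 main} gives $\te(n,0,1) = \ti(M_{n,1})$ with $M_{n,1} \in R(2^{n+1}-1)$. Since $2^{n+1}-1 \geq 1 \geq 0$ for $n \geq 0$, Lemma \ref{l pos} applies directly. For $n=0$ we have $M_{0,1}=\emptyset$, so $L(\te(0,0,1))=0 \in \CH^+$ trivially.

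For $i=1$: by equation \eqref{lc1 1}, $\te(n,1,1) = S_{-1}(\te(n,0,1)) = \ti(S_{-1}(M_{n,1}))$. Shifting a multiset by $-1$ moves the center of symmetry in Definition \ref{d Rc} down by one, so $M \in R(c)$ implies $S_{-1}(M) \in R(c-1)$. Equivalently, $\{-1\} \in R(-1)$, and Lemma \ref{l Rc add} gives $S_{-1}(M_{n,1}) \in R(2^{n+1}-2)$. Since $2^{n+1}-2 \geq 0$ for all $n \geq 0$, Lemma \ref{l pos} applies.

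For $i=-1$: by equation \eqref{lc1 -1},
\[
\te(n,-1,1) = \ti\bigl(S_{-1}(M_{n,1}) \cup S_{-2}(M_{n,0})\bigr).
\]
The first piece lies in $R(2^{n+1}-2)$, as above. Because $M_{n,0} \in R(2^{n+1})$, the second piece lies in $R(2^{n+1}-2)$ as well. Both pieces therefore lie in the same class $R(2^{n+1}-2)$, which is closed under multiset union (as used at the end of the proof of Theorem \ref{t 1 main}). Since $2^{n+1}-2 \geq 0$, Lemma \ref{l pos} again applies. The case $n=0$ can also be checked directly: $\te(0,-1,1) = \ti_0$, and $L(\ti_0) = h_0$.

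The only delicate point is the bookkeeping on the center $c$. The shift $S_{-2}$ in the $i=-1$ case lowers the center to $2^{n+1}-2$, which is exactly $0$ at $n=0$. So the nonnegativity hypothesis $c \geq 0$ of Lemma \ref{l pos} holds with no room to spare. I would therefore verify the shift rule $S_{-k}(R(c)) \subset R(c-k)$ explicitly from the description \eqref{Rc}: the singletons $m_i \geq c$ become $m_i - k \geq c-k$, and the intervals $[c-n_i, c+n_i]$ become $[(c-k)-n_i, (c-k)+n_i]$. I would also confirm the $n=0$ cases by hand rather than relying on the general argument.
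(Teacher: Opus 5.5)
Your proposal is correct and follows the paper's proof essentially step for step. You reduce via Lemma \ref{l pos} to membership in $\tCH(c)$ with $c\geq 0$, then use $M_{n,1}\in R(2^{n+1}-1)$ for $i=0$, the shift to $R(2^{n+1}-2)$ for $i=1$, and for $i=-1$ the splitting $\te(n,-1,1)=S_{-1}(\te(n,0,1))+S_{-2}(\te(n,0,0))$ with both pieces in $R(2^{n+1}-2)$; you also make explicit the shift rule $S_{-k}(R(c))\subset R(c-k)$ and the closure of $R(c)$ under unions, which the paper uses tacitly.
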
 

\begin{proof}
By Lemma \ref{l pos}, it is sufficient to show that these elements are in $\tCH(c)$ for some $c \geq 0$. $\te(n,0,1) \in \tCH(c)$ with $c = 2^{n+1}-1 \geq 0$. $\te(n,1,1) = S_{-1}(\te(n,0,1)) \in \tCH(c)$ with $c = 2^{n+1}-2\geq0$. And since $\te(n,-1,1) = \te(n,1,1)+ S_{-2}(\te(n,0,0))$ with $S_{-2}(\te(n,0,0)) \in \tCH(c)$ with $c =2^{n+1}-2 \geq 0$. This completes the proof. 
\end{proof}

\end{document}